\numberwithin{equation}{section}
\newtheorem{theorem}{Theorem}[section]
\newtheorem{lemma}[theorem]{Lemma}
\newtheorem{remark}[theorem]{Remark}
\newtheorem{definition}[theorem]{Definition}
\newcommand{\RR}{\mathbb{R}}
\newcommand{\cO}{\mathcal{O}}
\newcommand{\cE}{\mathcal{E}}
\newcommand{\e}{{\varepsilon}}
\def\beq{\begin{equation}}
\def\eeq{\end{equation}}
\def\bb1{{1\!\!1}}
\def\cR{\mathcal{R}}
\def\w{{\omega}}
\def\Div{\mathrm{div~}}
\def\intO{\int_\Omega}
\def\intdO{\int_{\partial\Omega}}
\begin{document}

\title{Remarks on the inviscid limit for the compressible flows}

\author{Claude Bardos\footnotemark[1]  \and Toan T. Nguyen\footnotemark[2] 
}

\renewcommand{\thefootnote}{\fnsymbol{footnote}}

\footnotetext[1]{Laboratoire J.-L. Lions \& Universit\'e Denis Diderot, BP187, 75252 Paris Cedex 05, France. Email: claude.bardos@gmail.com}

\footnotetext[2]{Department of Mathematics, Pennsylvania State University, State College, PA 16802, USA. Email: nguyen@math.psu.edu. TN's research was supported in part by the NSF under grant DMS-1405728.
}

\date{ }

\maketitle

\begin{abstract}
We establish various criteria, which are known in the incompressible case, for the validity of the inviscid limit for the compressible Navier-Stokes flows considered in a general domain $\Omega$ in $\RR^n$ with or without a boundary. In the presence of a boundary, a generalized Navier boundary condition for velocity is assumed, which in particular by convention includes the classical no-slip boundary conditions. In this general setting we extend the Kato criteria and show the  convergence to a solution which is dissipative "up to the boundary". In the case of smooth solutions, the convergence is obtained in the relative energy norm. 
\end{abstract}

$\quad${\bf Subject classifications.}  Primary 76D05, 76B99, 76D99.
\vskip5pt
\rightline{\em   For  H. Beira\~o da Veiga as a token of gratefulness and friendship$\quad \quad$}
\tableofcontents

\section{Introduction}
We are interested in the inviscid limit problem for compressible flows. Precisely, we consider the following compressible model \cite{Feir1, Feir2} of Navier-Stokes equations which consist of the two fundamental principles of conservation of mass and momentum: 
\begin{equation}\label{NS}
\begin{aligned}
 \rho_t + \nabla \cdot (\rho u) & = 0\\
(\rho u)_t + \nabla\cdot (\rho u \otimes u) +  \nabla p(\rho,\theta) &=  \nabla \cdot \epsilon\sigma(\nabla u)
\end{aligned}\end{equation}
in which the density $\rho \ge 0$, the velocity $u \in \RR^n$, the pressure $p = p(\rho)$ satisfying the $\gamma$-pressure law: $p = a_0 \rho^\gamma$, with $a_0>0, \gamma>1$, and the viscous stress tensor $\sigma(\nabla u)$ defined by \begin{equation}\label{def-stress}
\begin{aligned}
\sigma(\nabla u) &= \mu \Big[ (\nabla u + (\nabla u)^t) - \frac 23 (\nabla \cdot u ) I\Big ] + \eta (\nabla \cdot u) I 
\end{aligned}\end{equation}
with positive constants $\eta, \mu$. Here, $\e$ is a small positive parameter. The Navier-Stokes equations are considered in a domain $\Omega\subset \RR^n$, $n=2,3$. In the presence of a boundary, we assume the following generalized Navier boundary conditions for velocity: 
\begin{equation}\label{Navier-BC}u\cdot n = 0, \qquad  \epsilon \sigma (\nabla u) n \cdot \tau + \lambda_\e (x) u \cdot \tau= 0 \qquad \mbox{on} \quad \partial\Omega\end{equation}
with $\lambda_\e(x)\ge 0$ and $n, \tau$ being the outward normal and tangent vectors at $x$ on $\partial\Omega$. Here, by convention, we include $\lambda_\e = \infty$, in which case the above condition reduces to the classical no-slip boundary condition: 
\begin{equation}\label{no-slip} u_{\vert_{\partial\Omega}} =0.\end{equation} 
We shall consider both boundary conditions throughout the paper. We note that since $u \cdot n =0$ on the boundary, there is no boundary condition needed for the density function $\rho$. 

{\em We are interested in the problem when $\e \to 0$.} Naturally, one would expect in the limit to recover the compressible Euler equations: 
\begin{equation}\label{Euler}
\begin{aligned}
 \bar \rho_t + \nabla \cdot (\bar \rho \bar u) &= 0\\
  (\bar \rho \bar u)_t + \nabla \cdot (\bar \rho \bar u \otimes \bar u) + \nabla p(\bar \rho, \bar \theta) &= 0
  \end{aligned}\end{equation}
with the boundary condition:
$$ \bar u\cdot n =0, \qquad \mbox{on}\quad \partial \Omega .$$

Like the incompressible case when the density and temperature are homogenous ($\rho =1$), the inviscid limit problem is a very delicate issue, precisely
due to the appearance of boundary layer flows, compensating the discrepancy in the boundary conditions for the Navier Stokes and Euler equations (see, for instance, \cite{Asano, Const1, Vicol, WE, Grenier, Kato,Kelliher1, Kelliher, Mas, Mekawa, MT, SC1, SC2, Temam, Wang} and the references therein). In this paper, we shall establish several criteria, which are known in the incompressible case, for the inviscid limit to hold. These criteria can also be naturally extended to the compressible flows of Navier-Stokes-Fourier equations \cite{FN,FN1,N}, at least in the case when the temperature satisfies the zero Neumann boundary condition. 
Over the years  the contributions of Hugo to Mathematical Theory of Fluid Mechanics have been instrumental. In particular  with his articles  \cite{H1, H2}  he has shown recent interest in boundary layers. Therefore we are happy to dedicate to our friend H. Beira\~o da Veiga this present paper, on the occasion of his 70th birthday.

\subsection{Definitions}

We shall introduce the definition of weak solutions of Navier-Stokes and Euler equations that are used throughout the paper.  First, we recall that for smooth solutions to the Navier-Stokes equations, by multiplying the momentum equation by $u$ and integrating the result, we easily obtain the energy balance: 
\begin{equation}\label{id-EE}\frac{d}{dt} \cE(t) + \e \intO \sigma(\nabla u) : \nabla u\; dx  + \intdO \lambda_\e(x) |u|^2\; d \sigma = 0\end{equation}
in which $\cE(t)$ denotes the total energy defined by 
\begin{equation}\label{def-E} \cE(t) : = \intO \Big[ \rho \frac{|u|^2}{2} +H(\rho) \Big]\;dx , \qquad H(\rho): =  \frac{a_0 \rho^\gamma}{\gamma-1}.\end{equation}
 Here, $A:B$ denotes the tensor product between two matrices $A= (a_{jk})$, and $B = (b_{jk})$; precisely, $A:B = \sum_{j,k} a_{jk}b_{jk}$. In particular,  the energy identity yields a priori bound for the total energy $\cE(t)$ as well as the total dissipation thanks to the inequality: there is a positive constant $\theta_0$ so that 
 \begin{equation}\label{grad-bound} \intO \sigma(\nabla u) : \nabla u   \ge \theta_0 \intO |\nabla u|^2. \end{equation}
In the case of no-slip boundary conditions or in the domain with no boundary, the boundary term in the energy balance \eqref{id-EE} vanishes. 

Following Feireisl at al. \cite{Feir1, Feir2}, we introduce the following notion of weak solutions: 

 \begin{definition}[Finite energy weak solutions to Navier-Stokes] \label{def-NS} Let $(\rho_0,u_0)$ be some initial data so that $\rho_0\ge 0, \rho_0\in L^\gamma(\Omega), \rho_0 u_0^2 \in L^1(\Omega)$ and let $T $ be a fixed positive time.  The pair of functions $(\rho,u)$ is called a finite energy weak solution to Navier-Stokes if the following hold:

\begin{itemize}

\item $\rho \ge 0, \rho \in L^\infty (0,T; L^\gamma), u \in L^2(0,T; H^1(\Omega))$.

\item the Navier-Stokes equations in \eqref{NS} are satisfied in the usual distributional sense. 

\item the total energy $\cE(t)$ is locally integrable on $(0,T)$ and there holds the energy inequality: 
\begin{equation}\label{finiteEE} \cE(t)
 + \e \int_0^t \intO \sigma(\nabla u) : \nabla u  +\int_0^t  \intdO \lambda_\e(x) |u|^2\; d \sigma \le \cE(0).\end{equation}
\end{itemize}

\end{definition}


\begin{remark}{\em  Feireisl at al have shown in \cite{Feir2} that such a finite energy weak solution to Navier-Stokes exists globally in time, with the $\gamma$-pressure law of $\gamma >3/2$. 
} 
\end{remark}

Feireisl at al. \cite{Feir1} also introduces a notion of weak suitable solutions based on relative entropy and energy inequalities. There, they start with the notion of relative entropy function following Dafermos \cite{Dafermos} (also see \cite{Germain, Feir1,Feir2}): 
\begin{equation}\label{def-relE}H(\rho ; r) = H(\rho) - H(r) - H'(r) (\rho - r) \end{equation}
for all $\rho, r\ge 0$, in which $H(\rho) = \frac{a_0 \rho^\gamma}{\gamma-1}$ as defined in \eqref{def-E}, and the relative energy function associating with the solutions $(\rho, u)$ to the Navier-Stokes equations
\begin{equation}\label{def-relEE} 
\cE(\rho,u; r,w)(t): = \intO \Big( \frac 12 \rho |u - w|^2 + H(\rho; r) \Big) (t)  ,
\end{equation}
for all smooth test functions $(r,w)$. Let us note that since the function $H(\rho)$ is convex in $\{ \rho >0\}$, the function $H(\rho; r)$ can serve as a distance function between $\rho$ and $r$, and hence $\cE(\rho, u; r,w)$ can be used to measure the stability of the solutions $(\rho, u)$ as compared to test functions $(r,w)$. For instance, for any $r$ in a compact set in $(0,\infty)$, there holds
\begin{equation}\label{equiv-bdH}\begin{aligned}
 H(\rho;r) \quad \approx \quad |\rho - r|^2 \chi_{\{|\rho - r|\le 1\}} + |\rho - r|^\gamma \chi_{\{ |\rho - r|\ge 1\}} , \qquad \forall \rho\ge 0,
 \end{aligned}\end{equation}
in the sense that $H(\rho;r)$ gives an upper and lower bound in term of the right-hand side quantity. The bounds might depend on $r$. 

Next, let $(\rho,u)$ satisfy the Navier-Stokes equations in the distributional sense. That is, $(\rho,u)$ solves 
\begin{equation}\label{weak-1}\intO \rho r(t) \; dx = \intO \rho_0 r(0)\; dx + \int_0^t \intO (\rho \partial_t r + \rho u \cdot \nabla r ) \; dxdt \end{equation}
and 
\begin{equation}\label{weak-2}\begin{aligned} 
\intO \rho u w(t)\; dx + \int_0^t \Big[ \intO \rho u \cdot \partial_t w &+ \rho u \otimes u : \nabla w + p(\rho) \Div w  - \e \sigma(\nabla u) : \nabla w\Big] \; dxdt 
\\
&= \intO \rho_0 u_0 w(0)\; dx - \int_0^t \intdO \lambda_\e(x) u \cdot w ,
\end{aligned}\end{equation}
for any smooth test functions $(r,w)$ defined on $[0,T]\times \bar \Omega$ so that $r$ is bounded above and below away from zero, and 
$ w\cdot n = 0$ on $\partial \Omega.$ We remark that for such a test function, there holds the uniform equivalent bound \eqref{equiv-bdH}. 

Then, a direct calculation (\cite{Feir1, Feir2,Sueur}) yields 
\begin{equation}\label{rel-ineq} \begin{aligned}
\cE(\rho, u; r,w)(t)  &+ \e \int_0^t \intO \sigma(\nabla u) : \nabla u  +\int_0^t  \intdO \lambda_\e(x) |u|^2\; d \sigma
\\&\le \cE(\rho, u; r,w)(0) + \int_0^t \cR (\rho, u; r,w),
\end{aligned}\end{equation}
for almost every $t $ in $[0,T]$, in which 
\begin{equation}\label{def-R} \begin{aligned}
\cR(\rho, u; r,w): &= \intO \Big[ \rho (\partial_t + u \cdot \nabla ) w \cdot (w-u)  + \e \sigma(\nabla u) : \nabla w\Big] + \intdO \lambda_\e(x)u \cdot w\; d\sigma
\\
&\quad  + \intO \Big( (r-\rho) \partial_t H'(r) + (rw - \rho u)\cdot \nabla H'(r) \Big)
\\&\quad 
- \intO \Big (\rho (H'(\rho) - H'(r) ) - H(\rho; r)\Big)\Div w.
\end{aligned}\end{equation}

\begin{definition}[Suitable solutions to Navier-Stokes]\label{def-suitablesolutions} The pair $(\rho,u)$ is called a suitable solution to Navier-Stokes equations if $(\rho,u)$ is a renormalized weak solution in the sense of DiPerna-Lions \cite{DiPL} and the relative energy inequality \eqref{rel-ineq} holds for any smooth test functions $(r,w)$ defined on $[0,T]\times \bar \Omega$ so that $r$ is bounded above and below away from zero, and 
$ w\cdot n = 0$ on $\partial \Omega.$ 
\end{definition}

This motivates us to introduce the notion of dissipative weak solutions to Euler equations, following DiPerna and Lions \cite{DiPL}. Indeed, in the case of Euler when $\epsilon =0$, the relative energy inequality reads
\begin{equation}\label{rel-ineq-Euler} \begin{aligned}
\cE(\bar \rho, \bar u; r,w)(t)  &\le \cE(\rho_0, u_0; r,w)(0) + \int_0^t \cR^0 (\bar \rho, \bar u; r,w),
\end{aligned}\end{equation}
in which $\cR^0(\bar \rho, \bar u; r,w)$ is defined as in \eqref{def-R} with $\epsilon =0$ and no boundary term. In addition, if we assume further that the smooth test functions $(r,w)$ solve
\begin{equation}\label{EErw-def}
\begin{aligned}
r_t + \nabla \cdot (rw) & = 0\\
 (\partial_t + w \cdot \nabla) w + \nabla H'(r) &= E(r,w).
\end{aligned}\end{equation} for some residual $E(r,w)$, then a direct calculation and a straightforward estimate (for details, see Section \ref{sec-relestimates} and inequality \eqref{Euler-relenergy}) immediately yields 
$$ \begin{aligned}
\cR^0(\bar \rho, \bar u; r,w) &\le \intO \Big[\rho E(r,w)\cdot (w-\bar u) + c_0(r)\|\Div w\|_{L^\infty(\Omega)}  H(\bar \rho; r) \Big] \; dx 
\end{aligned}$$ 
for some positive constant $c_0(r)$ that depends only on the upper and lower bound of $r$ as in the estimate \eqref{equiv-bdH}.  Clearly, $$  \intO H(\bar \rho; r)(t) \; dx \le \cE(\bar \rho, \bar u; r,w)(t) .$$
Hence, the standard Gronwall's inequality applied to \eqref{rel-ineq-Euler}, together with the above estimates, yields 
\begin{equation}\label{Euler-relenergy-10}\begin{aligned}
\cE(\bar \rho, \bar u; r,w)(t)  &\le \cE(\bar \rho, \bar u; r,w)(0) e^{c_0(r)\int_0^t \|\Div w(\tau)\|_{L^\infty(\Omega)} \; d\tau}\\
&\quad + \int_0^t e^{c_0(r)\int_s^t \|\Div w(\tau)\|_{L^\infty(\Omega)} \; d\tau}\intO \rho E(r,w)\cdot (w-\bar u) \; dxds.
\end{aligned}
\end{equation}

Let us now introduce the notion of dissipative solutions to Euler; see its analogue in the incompressible case by  Bardos and Titi \cite{BT}, Definition 3.2 and especially Definition 4.1 taking into account of boundary effects.

 \begin{definition}[Dissipative solutions to Euler] \label{def-Euler} The pair $(\bar \rho, \bar u)$ is a dissipative solution of Euler equations if and only if $(\bar \rho, \bar u)$ satisfies the relative energy inequality \eqref{Euler-relenergy-10} for all smooth test functions $(r,w)$ defined on $[0,T]\times \bar \Omega$ so that $r$ is bounded above and below away from zero, 
$ w\cdot n = 0$ on $\partial \Omega$, and $(r,w)$ solves \eqref{EErw-def}.
\end{definition}

%
%


\begin{remark}\label{rem-dissipative}{\em If the Euler equations admit a smooth solution $(r,w)$, then the residual term $E(r,w) =0$ in \eqref{EErw-def} and hence the relative energy inequality \eqref{Euler-relenergy-10} yields the uniqueness of dissipative solutions, within class of weak solutions of the same initial data as those of the smooth solution.  }
 \end{remark}

\begin{remark}{\em
In case of no boundary, weak solutions of Euler that satisfy the energy inequality are dissipative solutions. This is no long true in case with boundaries; see a counterexample that weak solutions satisfying the energy inequality are not dissipative solutions, due to a boundary; see counterexamples in \cite{BSW}. }
\end{remark}

\subsection{Main results}

Our main results are as follows: 

\begin{theorem}[Absence of boundaries]\label{theo-nobdry} Let $(\rho_\e,u_\e)$ be any finite energy weak solution to Navier-Stokes in domain $\Omega$ without a boundary. Then, any weak limit $(\bar \rho, \bar u)$ of $(\rho_\e, u_\e)$ in the sense: 
\begin{equation}\label{def-weakconv}\begin{aligned}
 \rho_\e &\rightharpoonup \bar \rho, \qquad \mbox{weakly in }\quad L^\infty(0,T; L^\gamma (\Omega))
 \\
\rho_\e u_\e^2 &\rightharpoonup \bar \rho\bar u^2, \qquad \mbox{weakly in }\quad L^\infty(0,T; L^2(\Omega))
\end{aligned}\end{equation}
as $\e\to0$, is a dissipative solution to the Euler equations.  \end{theorem}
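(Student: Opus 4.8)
The goal is to verify that the weak limit $(\bar\rho,\bar u)$ satisfies the defining relative energy inequality \eqref{Euler-relenergy-10} for every admissible test pair $(r,w)$ solving \eqref{EErw-def}. The plan is to first obtain the corresponding inequality at the viscous level for each $\e$, with all of the $\e$-dependence collected into a single term that manifestly vanishes, and then to pass to the limit using only the weak convergences \eqref{def-weakconv} together with the convexity of the relative energy.

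I would start from the relative energy inequality \eqref{rel-ineq} applied to the finite energy weak solution $(\rho_\e,u_\e)$; since $\Omega$ has no boundary, the boundary integrals drop and the nonnegative dissipation term on the left may be discarded, leaving
$$\cE(\rho_\e,u_\e;r,w)(t)\le \cE(\rho_\e,u_\e;r,w)(0)+\int_0^t\cR(\rho_\e,u_\e;r,w).$$
Feeding the test equations \eqref{EErw-def} into \eqref{def-R} and carrying out the same algebraic reduction that produces the estimate preceding Definition \ref{def-Euler}, the only difference from the Euler computation is the surviving viscous contribution $\e\,\sigma(\nabla u_\e):\nabla w$; hence
$$\cR(\rho_\e,u_\e;r,w)\le \intO\rho_\e E(r,w)\cdot(w-u_\e)+c_0(r)\|\Div w\|_{L^\infty}\intO H(\rho_\e;r)+\e\intO\sigma(\nabla u_\e):\nabla w.$$
Bounding $\intO H(\rho_\e;r)\le \cE(\rho_\e,u_\e;r,w)$ and applying Gronwall's inequality exactly as in the derivation of \eqref{Euler-relenergy-10}, with source $g_\e(s)=\intO\rho_\e E\cdot(w-u_\e)+\e\intO\sigma(\nabla u_\e):\nabla w$, yields the viscous counterpart
$$\cE(\rho_\e,u_\e;r,w)(t)\le \cE(\rho_\e,u_\e;r,w)(0)\,e^{c_0(r)\int_0^t\|\Div w\|_{L^\infty}}+\int_0^t e^{c_0(r)\int_s^t\|\Div w\|_{L^\infty}}\,g_\e(s)\,ds.$$

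The heart of the argument is then the limit $\e\to0$ in this inequality, term by term. The viscous part of $g_\e$ is controlled by the uniform dissipation bound: the energy inequality \eqref{finiteEE} together with \eqref{grad-bound} gives $\e\int_0^t\intO|\nabla u_\e|^2\le C$ uniformly, so that $\big|\e\intO\sigma(\nabla u_\e):\nabla w\big|\lesssim \sqrt\e\,(\sqrt\e\,\|\nabla u_\e\|_{L^2})\|\nabla w\|_{L^2}\to0$. The initial term converges under the assumed convergence of the data, while the forcing $\intO\rho_\e E\cdot(w-u_\e)$ is affine in $\rho_\e$ and in the momentum $\rho_\e u_\e$ tested against the fixed smooth field $E(r,w)$ (weighted by the fixed bounded exponential), so it passes to the limit by \eqref{def-weakconv}. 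On the left I would invoke weak lower semicontinuity, $\liminf_{\e\to0}\cE(\rho_\e,u_\e;r,w)(t)\ge\cE(\bar\rho,\bar u;r,w)(t)$: writing the relative energy density in the momentum variables $(\rho,m)=(\rho,\rho u)$ as $\tfrac{|m|^2}{2\rho}-m\cdot w+\tfrac12\rho|w|^2+H(\rho;r)$, it is a convex function of $(\rho,m)$ plus terms linear in $(\rho,m)$, whence lower semicontinuity under weak convergence follows from the standard convex-integrand theory.

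I expect the main obstacle to be precisely this last step: justifying the lower semicontinuity rigorously and, relatedly, identifying the weak limit of the momentum $\rho_\e u_\e$ as $\bar\rho\bar u$, so that the convex functional is evaluated at the correct limit. This is the usual source of difficulty in compressible limits, where oscillation and concentration of the kinetic energy must be excluded; here it is precisely what the convergence hypotheses \eqref{def-weakconv} guarantee. A secondary technical point is that \eqref{Euler-relenergy-10} is required only for a.e.\ $t$, so the semicontinuity should be applied after integrating against a nonnegative temporal weight (or by working at Lebesgue points of the weak-$\ast$ convergence in $L^\infty(0,T;L^\gamma)$), which promotes the pointwise-in-$t$ statement to a space-time one where the convexity argument applies directly.
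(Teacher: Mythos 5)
Your proposal is correct and follows essentially the same route as the paper's proof: the relative energy inequality \eqref{rel-ineq} specialized to test pairs solving \eqref{EErw-def}, the reduction of the remainder via the equivalence of $\rho (H'(\rho) - H'(r)) - r (\rho-r) H''(r)$ with $H(\rho;r)$, Gronwall's inequality, and passage to the limit using weak lower semicontinuity of the relative energy together with the weak convergence of the momentum. The only deviation is minor and harmless: the paper absorbs the cross term $\e \intO \sigma(\nabla u_\e) : \nabla w$ into the retained dissipation by Young's inequality, leaving $C_0 \e \intO |\nabla w|^2 \to 0$, whereas you discard the dissipation on the left and dispose of the cross term at the end using the uniform bound $\e \int_0^T \intO |\nabla u_\e|^2 \le C$ coming from \eqref{finiteEE} and \eqref{grad-bound}; both arguments are valid, and your sketch of the convexity justification for the lower semicontinuity step is a correct elaboration of what the paper merely asserts.
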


\begin{theorem}[Presence of a boundary]\label{theo-bdry} Assume the generalized Navier boundary condition \eqref{Navier-BC} holds; in particular, we allow the case of no-slip boundary conditions \eqref{no-slip}. Let $(\rho_\e,u_\e)$ be any finite energy weak solution to Navier-Stokes and let $(\bar \rho, \bar u )$ be a weak limit of $(\rho_\e, u_\e)$ in the sense of \eqref{def-weakconv}. Then,  $(\bar \rho, \bar u)$ is a dissipative solution to Euler equations in the sense of Definition \ref{def-Euler} if any of one of the following conditions holds: 

\begin{enumerate}

\item[i.] (Bardos-Titi's criterium) $\e \sigma(\nabla u_\e)n \cdot \tau \to 0$ or equivalently $\epsilon (\omega_\e \times n)\cdot \tau \to 0$, as $\e\to 0$, in the sense of distribution in $(0,T)\times \partial\Omega$. Here, $\omega_\e = \nabla \times u_\e$. 

\item[ii.] (Kato-Sueur's criterium) The sequence $(\rho_\e, u_\e) $ satisfies the estimate: 
\begin{equation}\label{Kato} \lim_{\e\to 0} \int_0^T \int_{\Omega \cap \{ d(x,\partial \Omega) \le \e\}} \Big[ H(\rho_\e) + \e \frac{\rho_\e |u_\e|^2}{d(x,\partial\Omega)^2} +  \e |\nabla u_\e|^2 \Big]\; dx dt =0.\end{equation}

\item[iii.] (Constatin-Kukavica-Vicol's criterium) $\bar u\cdot \tau \ge 0$ almost everywhere on $\partial \Omega$, $\rho_\e$ is uniformly bounded, and the vorticity $\omega_\e=\nabla \times u_\e$ satisfies 
\begin{equation}\label{CKV-condition}  \e (\omega_\e \times n)\cdot \tau  \ge -M_\e(t)\qquad  \mbox{with } \qquad \lim_{\e\to0} \int_0^T M_\e(t) \; dt \le 0.\end{equation}

\end{enumerate} 

\end{theorem}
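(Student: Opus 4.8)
The plan is to obtain the dissipative inequality \eqref{Euler-relenergy-10} for the weak limit $(\bar\rho,\bar u)$ by passing to the limit $\e\to0$ in the relative energy inequality \eqref{rel-ineq} satisfied by each $(\rho_\e,u_\e)$, tested against an arbitrary smooth pair $(r,w)$ solving \eqref{EErw-def} with $w\cdot n=0$ on $\partial\Omega$. Writing $\cE_\e(t):=\cE(\rho_\e,u_\e;r,w)(t)$ and decomposing the residual as $\cR=\cR^0+\e\intO\sigma(\nabla u_\e):\nabla w+\intdO\lambda_\e u_\e\cdot w$, the same algebraic computation that produced \eqref{Euler-relenergy-10} bounds the inviscid part by $\cR^0(\rho_\e,u_\e;r,w)\le\intO[\rho_\e E(r,w)\cdot(w-u_\e)+c_0(r)\|\Div w\|_{L^\infty(\Omega)}H(\rho_\e;r)]$. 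Inserting this into \eqref{rel-ineq}, discarding the nonnegative dissipation terms on the left, using $\intO H(\rho_\e;r)\le\cE_\e$, and applying Gronwall's inequality gives, for a.e.\ $t$, a viscous analogue of \eqref{Euler-relenergy-10}:
\[
\cE_\e(t)\le\cE_\e(0)e^{c_0(r)\int_0^t\|\Div w\|_{L^\infty}}+\int_0^t e^{c_0(r)\int_s^t\|\Div w\|_{L^\infty}}\Big[\intO\rho_\e E(r,w)\cdot(w-u_\e)+\e\intO\sigma(\nabla u_\e):\nabla w+\intdO\lambda_\e u_\e\cdot w\Big]\,ds.
\]
It then remains to pass to the limit in each piece.

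The two $\e$-dependent correctors are the genuine issue. For the interior viscous work, the energy bound \eqref{finiteEE} together with \eqref{grad-bound} gives $\e\int_0^T\intO|\nabla u_\e|^2\le C$, so by Cauchy--Schwarz $|\e\intO\sigma(\nabla u_\e):\nabla w|\le C\e^{1/2}(\e\intO|\nabla u_\e|^2)^{1/2}\|\nabla w\|_{L^2}$, which integrates in time to an $O(\e^{1/2})$ quantity and hence vanishes. For the boundary corrector I would first rewrite it intrinsically: since $w\cdot n=0$ and, by \eqref{Navier-BC}, $u_\e\cdot n=0$ and $\lambda_\e u_\e\cdot\tau=-\e\sigma(\nabla u_\e)n\cdot\tau$ on $\partial\Omega$, one has $\intdO\lambda_\e u_\e\cdot w=-\intdO\e\,\sigma(\nabla u_\e)n\cdot\tau\,(w\cdot\tau)$, which is the same form arising in the no-slip case from integration by parts of the viscous term. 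A short boundary computation using $u_\e\cdot n=0$ and the curvature of $\partial\Omega$ converts $\sigma(\nabla u_\e)n\cdot\tau$ into $(\omega_\e\times n)\cdot\tau$ up to a term proportional to $u_\e\cdot\tau$, which establishes the equivalence claimed in (i).

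The three criteria are precisely what is needed to kill this single boundary contribution. Under (i) the rewritten term is the pairing of $\e\sigma(\nabla u_\e)n\cdot\tau$, assumed to tend to $0$ in $\mathcal D'((0,T)\times\partial\Omega)$, against the smooth weight $(w\cdot\tau)$ (times a bounded smooth time factor), so it vanishes by the definition of distributional convergence. Under (ii) I would follow Kato's device: replace $w$ by $w-w_\e^{\mathrm{bl}}$, where the corrector $w_\e^{\mathrm{bl}}$ is supported in the strip $\{d(x,\partial\Omega)\le\e\}$, equals $w$ on $\partial\Omega$, and transitions to $0$ across width $\e$; this removes the boundary term while producing interior contributions supported in the strip, whose normal derivative carries a factor $\e^{-1}$. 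These are then estimated by Cauchy--Schwarz and Hardy's inequality exactly against the three quantities $H(\rho_\e)$, $\e\rho_\e|u_\e|^2/d(x,\partial\Omega)^2$ and $\e|\nabla u_\e|^2$ appearing in \eqref{Kato}, so they vanish by hypothesis. Under (iii), using the vorticity form of the term, the one-sided bound \eqref{CKV-condition}, and the nonnegative tangential trace $\bar u\cdot\tau\ge0$ (taking $w$ to approach $\bar u$ at the boundary), one has $-\e(\omega_\e\times n)\cdot\tau\,(w\cdot\tau)\le M_\e(t)(w\cdot\tau)$ pointwise; integrating and using $\lim_{\e\to0}\int_0^TM_\e\le0$, together with the uniform bound on $\rho_\e$ that secures convergence of the remaining nonlinear terms, shows the $\limsup$ of the boundary contribution is $\le0$, which is the admissible sign.

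Finally I would pass to the limit in the remaining terms. The left side is weakly lower semicontinuous: $H(\,\cdot\,;r)$ is convex in $\rho$ and $\tfrac12\rho|u-w|^2$ is convex in $(\rho,\rho u)$, so with the weak convergences \eqref{def-weakconv} (and the induced $\rho_\e u_\e\rightharpoonup\bar\rho\bar u$, obtained from $\sqrt{\rho_\e}\in L^{2\gamma}$ and $\sqrt{\rho_\e}u_\e\in L^2$) one gets $\liminf_{\e\to0}\cE_\e(t)\ge\cE(\bar\rho,\bar u;r,w)(t)$, while $\cE_\e(0)\to\cE(\bar\rho,\bar u;r,w)(0)$ for fixed initial data; the residual converges because $\rho_\e E\cdot w$ and $\rho_\e u_\e\cdot E$ are linear in the weakly convergent quantities $\rho_\e$ and $\rho_\e u_\e$. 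Combining these, inequality \eqref{Euler-relenergy-10} holds for $(\bar\rho,\bar u)$ against every admissible $(r,w)$, i.e.\ $(\bar\rho,\bar u)$ is a dissipative solution in the sense of Definition \ref{def-Euler}. The main obstacle is the boundary-corrector step: the Kato construction in (ii) and the sign bookkeeping in (iii), both of which rest on the boundary identity relating $\sigma(\nabla u_\e)n\cdot\tau$ to $(\omega_\e\times n)\cdot\tau$.
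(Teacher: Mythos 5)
Your overall scheme and your treatment of criteria (i) and (ii) essentially reproduce the paper's proof: the same reduction of \eqref{rel-ineq} to a Gronwall-type inequality in which the only problematic term is the boundary integral $-\int_0^t\intdO \e\,\sigma(\nabla u_\e)n\cdot w\,d\sigma$ (covering both the Navier and no-slip cases), the same distributional-convergence argument for (i), and for (ii) the same Kato corrector. The one cosmetic difference in (ii) is that the paper does not modify the test function in the relative energy inequality: it inserts $w_\e=w\chi(d(x,\partial\Omega)/(c_0\e))$ directly into the weak momentum formulation \eqref{weak-2}, obtaining \eqref{weak-3}, estimates each strip integral by \eqref{Kato}, concludes $\e\sigma(\nabla u_\e)n\cdot\tau\to0$ in distributions, and then quotes (i); your variant with $w-w_\e^{\mathrm{bl}}$ is the same device but obliges you to re-derive the residual for a pair $(r,w-w_\e^{\mathrm{bl}})$ that no longer solves \eqref{EErw-def}, which is extra bookkeeping the paper avoids.

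Criterion (iii) is where your proposal has a genuine gap. You bound the boundary contribution by the pointwise inequality $-\e(\omega_\e\times n)\cdot\tau\,(w\cdot\tau)\le M_\e(t)\,(w\cdot\tau)$ \emph{on} $\partial\Omega$ and integrate over $\partial\Omega\times(0,T)$. But $(\rho_\e,u_\e)$ is only a finite energy weak solution: the sole gradient information is $\e\int_0^T\intO|\nabla u_\e|^2\le C$, so $\omega_\e$ has no trace on $\partial\Omega$, and the quantity $\e\sigma(\nabla u_\e)n\cdot\tau$ appearing in the relative energy inequality exists only as a distribution defined through the weak formulation; a one-sided bound cannot be multiplied pointwise by $w\cdot\tau$ and integrated over the boundary. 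This is precisely why the paper runs (iii) through the Kato corrector a second time: by \eqref{weak-4} the boundary term equals interior integrals over the strip $\Gamma_\e$, the cutoff is chosen with $\chi'\le 0$ so that the singular part of $n\cdot\nabla w_\e^\tau$, namely $\frac{1}{c_0\e}\chi' w\le 0$, is a nonpositive interior weight, and it is against this weight that \eqref{CKV-condition} is applied. Two ingredients of that argument are indispensable and absent from your sketch: the convective strip term $\rho_\e u_\e\otimes u_\e:\nabla w_\e$ is controlled by Hardy's inequality (this is where the uniform bound on $\rho_\e$ genuinely enters) and then \emph{absorbed into the viscous dissipation} $\frac{\e\theta_0}{2}\int_0^T\intO|\nabla u_\e|^2$ retained on the left-hand side, with $c_0$ taken so small that $C/c_0\le\theta_0/2$. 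Since you discard the dissipation terms at the very first step, this absorption is unavailable in your setup. Finally, the remaining pieces of $\e\sigma(\nabla u_\e):\nabla w_\e$ (the terms $\tau\cdot\nabla(u_\e\cdot n)$ and $(\tau\cdot\nabla)n\cdot u_\e$ coming from the computation behind Lemma \ref{lem-stress}, paired with the $\cO(1/\e)$ part of $\nabla w_\e$) require the interior estimates \eqref{est-Du-we} and the displays that follow it; none of this can be bypassed by the direct boundary estimate you propose, so part (iii) needs to be redone along the paper's interior-corrector lines.
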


\begin{remark} {\em In the case that the density function $\rho_\e$ is uniformly bounded, by a use of the Hardy's inequality, the condition \eqref{Kato} reduces to the original Kato's condition as in the incompressible case, namely
\begin{equation}\label{Kato1} \lim_{\e\to 0} \int_0^T \int_{\Omega \cap \{ d(x,\partial \Omega) \le \e\}} \e |\nabla u_\e|^2\; dx dt =0.\end{equation}}
\end{remark}

\begin{remark}{\em
Sueur \cite{Sueur} proves that given a strong solution $(\bar \rho, \bar u)$ to Euler in $C^{1+\alpha} ((0,T)\times \Omega)$ with $\bar\rho$ being bounded above and away from zero, there is a sequence of finite energy weak solutions to Navier-Stokes that converges to the Euler solution in the relative energy norm in both cases: 
\begin{enumerate}

\item[i.] No-slip boundary condition under the Kato's condition;  

\item[ii.] Navier boundary condition with $\lambda_\e \to 0$.

\end{enumerate}
Theorem \ref{theo-bdry} recovers Sueur's results in both of these cases, thanks to the weak-strong uniqueness property of the dissipative solutions; see Remark \ref{rem-dissipative}.  
}
\end{remark}





\section{Proof of the main theorems}

\subsection{Stress-free condition}
Let us write the stress-free boundary condition in term of vorticity. 
\begin{lemma}\label{lem-stress} Let the stress tensor $\sigma(\nabla u)$ be defined as in \eqref{def-stress}, $\omega = \nabla \times u$, and let $u\cdot n=0$ on the boundary $\partial\Omega$. There holds
$$ \sigma(\nabla u) n \cdot \tau =  \mu  (\omega \times n)\cdot \tau - \kappa u\cdot \tau$$
on $\partial \Omega$, in which $\kappa: = 2\mu  (\tau \cdot \nabla) n \cdot\tau$ with $n$ and $\tau$ being normal and tangent vectors to the boundary $\partial \Omega$.
\end{lemma}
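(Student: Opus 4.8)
The plan is to reduce the stress tensor to its symmetric-gradient part and then convert that part into vorticity by a pointwise vector identity, absorbing the leftover into a curvature term. First I would observe that the two isotropic contributions in \eqref{def-stress}, namely $-\tfrac{2}{3}\mu(\nabla\cdot u)I$ and $\eta(\nabla\cdot u)I$, each contribute a multiple of $(\nabla\cdot u)(n\cdot\tau)$ to $\sigma(\nabla u)n\cdot\tau$; since $n$ and $\tau$ are orthogonal, $n\cdot\tau=0$ and these terms drop out entirely. Hence it suffices to treat $S:=\nabla u+(\nabla u)^t$ and to prove $\mu\, Sn\cdot\tau=\mu(\omega\times n)\cdot\tau-\kappa\, u\cdot\tau$.

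Next I would establish the pointwise identity relating $Sn$ to the vorticity. Working componentwise with the Levi-Civita symbol, one finds $(\omega\times n)_i=(n\cdot\nabla)u_i-\sum_j n_j\partial_i u_j$, whereas $(Sn)_i=(n\cdot\nabla)u_i+\sum_j n_j\partial_i u_j$. Subtracting yields the clean relation
\[ (Sn-\omega\times n)_i=2\sum_j n_j\,\partial_i u_j=2\,\partial_i(u\cdot n)-2\sum_j u_j\,\partial_i n_j, \]
which is legitimate once $n$ is extended smoothly off $\partial\Omega$, for instance as $\nabla d$ with $d$ the signed distance to the boundary.

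The geometric step, which I expect to be the crux, is to contract this identity with $\tau$ and use the boundary data. Because $u\cdot n=0$ holds along all of $\partial\Omega$, its tangential derivative vanishes, so $(\tau\cdot\nabla)(u\cdot n)=0$ on the boundary and the first term dies, leaving
\[ Sn\cdot\tau=(\omega\times n)\cdot\tau-2\,u\cdot(\tau\cdot\nabla)n. \]
Differentiating $|n|^2=1$ tangentially gives $(\tau\cdot\nabla)n\perp n$, so $(\tau\cdot\nabla)n$ is tangential; since $u$ is also tangential on $\partial\Omega$, we obtain $u\cdot(\tau\cdot\nabla)n=(u\cdot\tau)\big((\tau\cdot\nabla)n\cdot\tau\big)$, which is immediate in two dimensions and in higher dimension rests on the self-adjointness of the Weingarten map $\tau\mapsto(\tau\cdot\nabla)n$. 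Multiplying through by $\mu$ and setting $\kappa=2\mu(\tau\cdot\nabla)n\cdot\tau$ delivers the stated formula.

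The only genuine obstacle is this last bit of boundary bookkeeping: fixing a valid smooth extension of $n$ and $\tau$, justifying that the tangential derivative of $u\cdot n$ vanishes, and correctly identifying $\kappa$ with $2\mu$ times the normal curvature (the second fundamental form evaluated on $\tau$). Everything else is the routine vector-calculus manipulation sketched above.
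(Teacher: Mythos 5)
Your proof is correct and follows essentially the same route as the paper's: the isotropic terms are discarded because $n\cdot\tau=0$, the symmetric gradient $\nabla u+(\nabla u)^t$ applied to $n$ is converted into $(\omega\times n)\cdot\tau$ plus the terms $2\,\tau\cdot\nabla(u\cdot n)-2\,u\cdot(\tau\cdot\nabla)n$, and the boundary condition $u\cdot n=0$ (together with $\tau$ being tangential) kills the term $\tau\cdot\nabla(u\cdot n)$. If anything, you are more explicit than the paper on the final step --- identifying $u\cdot(\tau\cdot\nabla)n$ with $(u\cdot\tau)\,\bigl((\tau\cdot\nabla)n\cdot\tau\bigr)$ using the tangentiality of $(\tau\cdot\nabla)n$ and of $u$ --- which the paper's proof leaves implicit.
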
 
\begin{proof} Let us work in $\RR^3$. By convention, $\nabla u$ is the matrix with column being $\partial_{x_j} u$, $u \in \RR^3$, for each column $j = 1,2,3$. A direct calculation gives 
$$ (\nabla u - (\nabla u)^t)) n \cdot \tau = (\omega \times n)\cdot \tau .$$
Next, we compute 
$$ (\nabla u)^T n \cdot \tau = \sum_{k,j} \tau_k\partial_k u_j n_j  = \tau \cdot \nabla (u\cdot n) - (\tau \cdot \nabla) n \cdot u.$$
By using the assumption that $u \cdot n=0$ on the boundary and $\tau$ is tangent to the boundary, $ \tau \cdot \nabla (u\cdot n) =0$ on $\partial \Omega$. 
Next, by definition, we have
$$
\begin{aligned}
\sigma(\nabla u)n \cdot \tau &= \mu (\nabla u + (\nabla u)^t) n \cdot \tau  + (\eta - \frac 23 \mu)(\Div u) n\cdot \tau
\end{aligned}$$ 
which completes the proof of the lemma, by the above calculations and the fact that $ n\cdot \tau =0$.
\end{proof}

\subsection{Relative energy estimates}\label{sec-relestimates}
In this section, let us derive some basic relative energy estimates. We recall the remainder term in the relative energy inequality \eqref{rel-ineq} defined by 
\begin{equation}\label{def-RNS} \begin{aligned}
\cR(\rho, u; r,w): &= \intO \Big[ \rho (\partial_t + u \cdot \nabla ) w \cdot (w-u)  + \e \sigma(\nabla u) : \nabla w\Big] + \intdO \lambda_\e(x)u \cdot w\; d\sigma
\\
&\quad  + \intO \Big( (r-\rho) \partial_t H'(r) + (rw - \rho u)\cdot \nabla H'(r) \Big)
\\&\quad 
- \intO \Big (\rho (H'(\rho) - H'(r) ) - H(\rho; r)\Big)\Div w
\end{aligned}\end{equation}
where $(r,w)$ are smooth test functions. If we assume further that the pair $(r,w)$ solves
\begin{equation}\label{EErw}
\begin{aligned}
r_t + \nabla \cdot (rw) & = 0\\
 (\partial_t + w \cdot \nabla) w + \nabla H'(r) &= E(r,w)
\end{aligned}\end{equation}
for some residual $E(r,w)$, then a direct calculation immediately yields 
$$ \begin{aligned}
\cR(\rho, u; r,w) &= \intO \Big[\rho E(r,w)\cdot (w-u) + \e \sigma(\nabla u) : \nabla w\Big] + \intdO \lambda_\e(x)u \cdot w\; d\sigma
\\&\quad 
- \intO \Big (\rho (H'(\rho) - H'(r)) - r (\rho-r) H''(r) - H(\rho; r)\Big)\Div w.
\end{aligned}$$ 

%

\begin{lemma} Let $H(\rho;r)$ be defined as in \eqref{def-relE}. Let $r$ be arbitrary in a compact set of $(0,\infty)$. There holds 
$$\rho (H'(\rho) - H'(r)) - r (\rho-r) H''(r) \quad \approx\quad  H(\rho;r).$$
for all $\rho \ge 0$.
\end{lemma}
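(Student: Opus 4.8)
The plan is to show that the left-hand side is in fact an exact constant multiple of $H(\rho;r)$, so that the claimed equivalence holds trivially. Write $G(\rho,r) := \rho(H'(\rho) - H'(r)) - r(\rho - r)H''(r)$ for the quantity on the left. Since $H(\rho) = \frac{a_0}{\gamma-1}\rho^\gamma$ is homogeneous of degree $\gamma$, the structural facts I would exploit are Euler's relation $\rho H'(\rho) = \gamma H(\rho)$ together with $rH''(r) = (\gamma-1)H'(r)$ and $\gamma H(r) = rH'(r)$; all three follow at once from $H'(\rho) = \frac{a_0\gamma}{\gamma-1}\rho^{\gamma-1}$ and $H''(\rho) = a_0\gamma\rho^{\gamma-2}$, and they are the only ingredients needed.

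First I would use Euler's relation to replace $\rho H'(\rho)$ by $\gamma H(\rho)$ in the first term of $G$, obtaining
$$G(\rho, r) = \gamma H(\rho) - \rho H'(r) - r(\rho - r)H''(r).$$
Next I would compare this against $\gamma H(\rho;r) = \gamma H(\rho) - \gamma H(r) - \gamma H'(r)(\rho - r)$ and compute the difference $G(\rho,r) - \gamma H(\rho;r)$. The homogeneous terms $\gamma H(\rho)$ cancel, leaving a quantity that is at most affine in $\rho$ with coefficients depending on $r$ only. Substituting $rH''(r) = (\gamma-1)H'(r)$ and $\gamma H(r) = rH'(r)$ and collecting the factor $H'(r)$, both the coefficient of $\rho$ and the constant term reduce to $[-\rho + r + (\rho-r)(\gamma - (\gamma-1))] = 0$, so that $G(\rho,r) - \gamma H(\rho;r) = 0$ for every $\rho \ge 0$.

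This yields the exact identity $G(\rho,r) = \gamma H(\rho;r)$, whence the equivalence $G \approx H(\rho;r)$ is immediate with both comparison constants equal to $\gamma$, uniformly over $r$ and in particular over any compact subset of $(0,\infty)$. I would also note that the endpoint $\rho = 0$ creates no difficulty: since $\gamma > 1$ we have $\rho H'(\rho)\to 0$ there, and one checks directly that $G(0,r) = r^2 H''(r) = a_0\gamma r^\gamma = \gamma H(0;r)$. Since the argument is purely algebraic there is no genuine analytic obstacle; the only point worth flagging is the recognition that the stated $\approx$ is in reality an equality up to the single factor $\gamma$, which is precisely what keeps the relative-energy estimates of Section~\ref{sec-relestimates} clean.
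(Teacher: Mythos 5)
Your proof is correct, and it takes a genuinely different route from the paper's. You exploit the exact homogeneity of the $\gamma$-law: the Euler-type identities $\rho H'(\rho)=\gamma H(\rho)$, $rH''(r)=(\gamma-1)H'(r)$ and $rH'(r)=\gamma H(r)$ turn the left-hand side into precisely $\gamma H(\rho;r)$, an identity that checks out (for instance, with $a_0=1$, $\gamma=2$ the two sides are $2(\rho-r)^2$ and $(\rho-r)^2$, ratio exactly $\gamma$), and the stated equivalence follows with constant $\gamma$ in both directions. The paper argues differently: it decomposes the left-hand side as $(\rho-r)(H'(\rho)-H'(r))+(\rho-r)^2H''(r)+r\bigl[H'(\rho)-H'(r)-H''(r)(\rho-r)\bigr]$ and compares against the two-regime characterization \eqref{equiv-bdH} of $H(\rho;r)$, treating $|\rho-r|\le 1$ (quadratic Taylor behaviour) and $|\rho-r|\ge 1$ (power growth) separately, which yields equivalence only with non-explicit constants $c_0(r)$ tied to the compact set containing $r$. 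Your approach buys more in this setting: an exact identity, explicit constants, and uniformity over all $r>0$, so the compactness hypothesis on $r$ is not even needed, and the constant feeding into the relative-energy estimates of Section \ref{sec-relestimates} becomes simply $\gamma$. What the paper's approach buys is robustness: its Taylor-plus-growth structure survives for convex pressure laws that are not pure powers, where no identity like yours is available — your computation relies crucially on homogeneity of $H$. Both proofs are complete for the isentropic $\gamma$-law considered here; yours is the sharper one in this context.
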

\begin{proof} Indeed, let us write 
$$\begin{aligned}
\rho (H'(\rho) - H'(r)) - r (\rho-r) H''(r)  &= (\rho-r) ( H'(\rho) - H'(r)) +  (\rho-r)^2 H''(r)
\\
&\quad  + r[ H'(\rho) - H'(r) - H''(r) (\rho -r)], 
\end{aligned}$$
which is clearly of order $|\rho -r|^2$ when $|\rho -r|\le 1$, and hence of order of $H(\rho;r)$. Now, when $|\rho -r|\ge 1$, we have
$$\rho (H'(\rho) - H'(r)) - r (\rho-r) H''(r)  \le c_0(r) \left\{ \begin{aligned} |\rho - r| \le |\rho-r|^\gamma, &\qquad \mbox{when} \quad \rho \le r\\
\rho^\gamma - r^\gamma \le |\rho-r|^\gamma, &\qquad \mbox{when} \quad \rho \ge r.
\end{aligned}\right.$$
This proves the lemma. \end{proof}

Using the lemma, the relative energy inequality \eqref{rel-ineq} reduces to 
\begin{equation}\label{NS-relenergy-pf}\begin{aligned}
\cE(\rho, &u; r,w)(t)  + \e \int_0^t \intO \sigma(\nabla u) : \nabla u  +\int_0^t  \intdO \lambda_\e(x) |u|^2\; d \sigma
\\&\le \cE(\rho, u; r,w)(0) + \int_0^t\intO \Big[\rho E(r,w)\cdot (w-u) + \e \sigma(\nabla u) : \nabla w\Big] 
\\&\quad + c_0(r)\|\Div w\|_{L^\infty(\Omega)} \int_0^t \intO H(\rho; r)\; dxds+ \intdO \lambda_\e(x)u \cdot w\; d\sigma,
\end{aligned}
\end{equation}
for all smooth test functions $(r,w)$ that solve \eqref{EErw}. In particular, the same calculation with $\epsilon =0$ and with no boundary term yields 
\begin{equation}\label{Euler-relenergy}\begin{aligned}
\cE(\bar \rho, \bar u; r,w)(t)  \le& \cE(\bar \rho, \bar u; r,w)(0) + \int_0^t\intO \rho E(r,w)\cdot (w-\bar u) \; dxds
\\& +  c_0(r)\|\Div w\|_{L^\infty(\Omega)} \int_0^t \intO H(\bar \rho; r) \; dxds
\end{aligned}
\end{equation}
for $(\bar \rho, \bar u)$ solving the Euler equations in the sense of \eqref{weak-1} and \eqref{weak-2}. 
\subsection{Absence of boundaries: proof of Theorem \ref{theo-nobdry}}
Let $(\rho, u)$ be a finite energy weak solution to Navier-Stokes, and let $(\bar \rho, \bar u)$ be a weak limit of $(\rho, u)$ as $\e \to 0$ in the sense as in Theorem \ref{theo-nobdry}. 
We remark that the weak convergences in the theorem immediately yield $\rho u\rightharpoonup \bar \rho \bar u$ weakly in $L^\infty(0,T; L^{\frac{2\gamma}{\gamma+1}}(\Omega))$. We shall show that $(\bar \rho, \bar u)$ is a dissipative solution to Euler in the sense of Definition \ref{def-Euler}. Indeed, let $(r,w)$ be any smooth test functions defined on $[0,T]\times \RR^n$ so that $r$ is bounded above and below away from zero, and $(r,w)$ solves 
\begin{equation}\label{eqs-rw}\begin{aligned}
r_t + \nabla \cdot (rw) & = 0\\
 (\partial_t + w \cdot \nabla) w + \nabla H'(r) &= E(r,w)
\end{aligned}
\end{equation}for some residual $E(r,w)$. The starting point is the relative energy inequality \eqref{NS-relenergy-pf}. We note that there exists a positive constant $\theta_0$ so that 
\begin{equation}\label{H1-bound}\intO \sigma(\nabla u) : \nabla u   \ge \theta_0 \intO |\nabla u|^2\end{equation}
and so the energy inequality reads
\begin{equation}\label{NS-relenergy-1}\begin{aligned}
\cE(\rho, &u; r,w)(t)  + \e\theta_0 \int_0^t \intO 
|\nabla u|^2 
\\&\le \cE(\rho, u; r,w)(0) + \int_0^t\intO \Big[\rho E(r,w)\cdot (w-u) + \e \sigma(\nabla u) : \nabla w\Big] 
\\&\quad + c_0(r)\|\Div w\|_{L^\infty(\Omega)} \int_0^t \cE(\rho, u; r,w)(\tau)\; d\tau 
\end{aligned}
\end{equation}
in which the integral 
\begin{equation}\label{Y-bound} \e \intO \sigma(\nabla u) : \nabla w \le \frac {\e \theta_0} 2 \int |\nabla u|^2 + \e C_0 \intO |\nabla w|^2\end{equation}
has the first term on the right absorbed into the left hand-side of \eqref{rel-ineq}, whereas the second term converges to zero as $\e\to 0$. Hence, \begin{equation}\label{NS-relenergy-2}\begin{aligned}
\cE(\rho, u; r,w)(t) &\le \cE(\rho, u; r,w)(0) + \int_0^t\intO \Big[\rho E(r,w)\cdot (w-u) +C_0 \e |\nabla w|^2\Big] 
\\&\quad + c_0(r)\|\Div w\|_{L^\infty(\Omega)} \int_0^t \cE(\rho, u; r,w)(\tau)\; d\tau 
\end{aligned}
\end{equation}
which by the Gronwall's inequality then yields 
\begin{equation}\label{NS-relenergy-3}\begin{aligned}
\cE(\rho,& u; r,w)(t)  \le \cE(\rho_0, u_0; r,w)(0) e^{c_0(r)\int_0^t \|\Div w(\tau)\|_{L^\infty(\Omega)} \; d\tau}\\
&\quad + 
\int_0^t e^{c_0(r)\int_s^t \|\Div w(\tau)\|_{L^\infty(\Omega)} \; d\tau}\intO \Big[\rho E(r,w)\cdot (w-u) +C_0 \e |\nabla w|^2\Big]  \; dxds.
\end{aligned}
\end{equation}
We now let $\e\to 0$ in the above inequality. We have  
$$ \cE(\bar \rho, \bar u; r,w)(t)  \le \liminf_{\e\to 0} \cE(\rho, u; r,w)(t) $$ 
and by the fact that $\rho (w-u)$ converges weakly in $L^\infty(0,t; L^{\frac{2\gamma}{\gamma+1}}(\Omega) )$, 
$$ \begin{aligned}
\lim_{\e\to0} \int_0^t &e^{c_0(r) \int_s^t \|\Div w(\tau)\|_{L^\infty(\Omega)} \; d\tau}\intO\Big[\rho E(r,w)\cdot (w-u) +C_0 \e |\nabla w|^2\Big]  \; dxds 
\\& = \int_0^t e^{c_0(r)\int_s^t \|\Div w(\tau)\|_{L^\infty(\Omega)} \; d\tau}\intO \bar \rho E(r,w)\cdot (w-\bar u)  \; dxds.
\end{aligned}$$
Hence, we have obtained the inequality in the limit:
\begin{equation}\label{Euler-relenergy-3}\begin{aligned}
\cE(\bar \rho,& \bar u; r,w)(t)  \le \cE(\rho_0, u_0; r,w)(0) e^{c_0(r)\int_0^t \|\Div w(\tau)\|_{L^\infty(\Omega)} \; d\tau}\\
&\quad + 
\int_0^t e^{c_0(r)\int_s^t \|\Div w(\tau)\|_{L^\infty(\Omega)} \; d\tau}\intO \bar \rho E(r,w)\cdot (w-\bar u) 
\; dxds.
\end{aligned}
\end{equation}
This proves that $(\bar \rho, \bar u)$ is a dissipative solution to Euler, which gives Theorem \ref{theo-nobdry}. 

\subsection{Presence of a boundary: proof of Theorem \ref{theo-bdry}}

\subsubsection{Proof of Bardos-Titi's criterium} Let us first prove (i) of Theorem \ref{theo-bdry}. Similarly as in the above case when no boundary is present, let $(r,w)$ be any smooth test functions defined on $[0,T]\times \bar \Omega$ so that  
$ w\cdot n = 0$ on $\partial \Omega,$ and $(r,w)$ solves \eqref{eqs-rw}. Then, there holds the relative energy inequality \eqref{NS-relenergy-pf}, together with the estimate \eqref{Y-bound}:
$$\begin{aligned}
\cE(\rho, &u; r,w)(t)  + \e \theta_0\int_0^t \intO | \nabla u|^2  -\int_0^t  \intdO  \e \sigma(\nabla u)n \cdot u\; d \sigma
\\&\le \cE(\rho, u; r,w)(0) + \int_0^t\intO \Big[\rho E(r,w)\cdot (w-u) + C_0\e | \nabla w|^2\Big] 
\\&\quad + c_0(r)\|\Div w\|_{L^\infty(\Omega)} \int_0^t \cE(\rho, u; r,w)(\tau)\; d\tau - \int_0^t\intdO \e \sigma(\nabla u)n \cdot w \; d\sigma .
\end{aligned}
$$
The assumption in $(i)$ is made precisely so that $\e \sigma(\nabla u)n \cdot \tau\to 0$, as $\e\to 0$, weakly. We note that this is equivalent to the assumption that $\e (\omega \times n)\cdot \tau \to 0$ by Lemma \ref{lem-stress}. Hence, the last boundary integral term vanishes in the limit, or rather 
$$ \begin{aligned}
\lim_{\e\to0} \int_0^t \intdO  \e \sigma(\nabla u)n  \cdot we^{c_0(r) \int_s^t \|\Div w(\tau)\|_{L^\infty(\Omega)} \; d\tau} \; d\sigma =0.
\end{aligned}$$
Whereas, the boundary term appearing on the left of the above inequality is either nonnegative, if the Navier boundary condition $-\e \sigma(\nabla u)n \cdot \tau = \lambda_\epsilon u \cdot \tau$ is assumed, with $\lambda_\e \ge 0$, or vanishes if the no-slip boundary condition $u_\e =0$ on $\partial\Omega$ is assumed. Hence, in the limit of $\e\to0$, we obtain the same inequality as in \eqref{Euler-relenergy-3}, which proves that $(\bar \rho, \bar u)$ is a dissipative solution to Euler equations.

\subsubsection{Proof of Kato-Sueur's criterium} Next, we prove the second statement (ii). We shall show that 
\begin{equation}\label{weak-bdry} \lim_{\e\to0} \int_0^T \intdO \e \sigma(\nabla u)n \cdot w \; d\sigma ds =0\end{equation}
for all smooth test functions $w$ so that $w\cdot n =0$ on the boundary $\partial\Omega$. This and the statement (i) would then yield (ii). To do so, let $w$ be the test function. We then construct a Kato fake layer $w_\e$, following Kato \cite{Kato}, or in fact, Sueur  \cite[Section 2.2]{Sueur}) as follows: 
\begin{equation}\label{fake-layer} w_\e := w \chi(\frac{d(x,\partial\Omega)}{c_0\e}) ,\end{equation} 
for arbitrary positive constant $c_0$, in which $\chi(\cdot)$ is a smooth cut-off function so that $\chi(0) =1$ and $\chi(z) =0$ for $z\ge 1$. It follows that  so that $w_\e  = w$ on the boundary $\partial\Omega\times (0,T)$ and has its support contained in the domain: $ \Gamma_\e \times (0,T)$ with $ \Gamma_\e = \{ x\in \Omega~:~ d(x,\partial\Omega) \le c_0\e\}.$
Furthermore, $w_\e$ satisfies 
\begin{equation}\label{bound-we}\| \Div w_\e\|_{L^\infty} +  \|\partial_t w_\e\|_{L^\infty} +  \| \e  \nabla w_\e\|_{L^\infty}\le C.\end{equation}
Let us now use $w_\e$ as a test function in the weak formulation \eqref{weak-2}, yielding 
\begin{equation}\label{weak-3}\begin{aligned} 
 \int_0^T &\intdO \e \sigma(\nabla u)n \cdot w_\e \; d\sigma dt
 \\&=  \int_0^T  \intO \Big[\rho u \cdot \partial_t w_\e + \rho u \otimes u : \nabla w_\e + p(\rho) \Div w_\e  - \e \sigma(\nabla u) : \nabla w_\e\Big] \; dxdt.
\end{aligned}\end{equation}
Since $w_\e = w$ on the boundary $\partial\Omega$, in order to prove \eqref{weak-bdry}, it suffices to show that each term on the right converges to zero as $\e\to 0$. We treat term by term. First, we have 
$$ \e \int_0^T \intO | \sigma(\nabla u) : \nabla w_\e | \le \sqrt \e \| \nabla u\|_{L^2(\Gamma_\e \times (0,T))} \| \sqrt \e\nabla w_\e\|_{L^2(\Gamma_\e \times (0,T))} $$
which converges to zero as $\e\to0$, since $ \sqrt \e \| \nabla u\|_{L^2(\Gamma_\e \times (0,T))} \to 0$ by \eqref{Kato} and $\e\nabla w_\e$ is bounded in $L^\infty$ and hence $\sqrt \e \nabla w_\e$ bounded in $L^2(\Gamma_\e\times (0,T))$, thanks to the fact that the Lebesgue measure of $\Gamma_\e \times (0,T)$ is of order $\e$. Next, we estimate 
$$  \int_0^T\intO \rho u \cdot \partial_t w_\e \; dxds \le \| \rho u \|_{L^{\frac{2\gamma}{\gamma+1}}(\Omega\times (0,T))} \| \partial_t w_\e\|_{L^{\frac{2\gamma}{\gamma-1}}(\Gamma_\e \times (0,T))}  \to 0,$$ 
as $\e\to 0$, since $ \rho u$ and  $\partial_t w_\e$ are uniformly bounded in $L^{\frac{2\gamma}{\gamma+1}}$ and $L^\infty$, respectively, and the Lebesgue measure of $\Gamma_\e \times (0,T)$ tends to zero. Similarly, we also have 
$$  \int_0^T \intO |p(\rho) \Div w_\e|\; dxdt \le C \int_0^T \int_{\Gamma_\e} H(\rho)\; dxdt$$ 
which converges to zero by the assumption. Finally, we estimate 
$$ \begin{aligned} \int_0^T \intO  |\rho u \otimes u : \nabla w_\e | \le \int_0^T \int_{\Gamma_\e} \frac{\rho |u|^2}{d(x,\partial\Omega)^2} \e^2| \nabla w_\e| \le C \e  \int_0^T \int_{\Gamma_\e} \frac{\rho |u|^2}{d(x,\partial\Omega)^2},
 \end{aligned}  $$
which again tends to zero as $\e\to 0$ by the assumption. Combining these into \eqref{weak-3} proves that 
$$ \int_0^T \intdO \e \sigma(\nabla u)n \cdot w \; d\sigma dt =  \int_0^T \intdO \e \sigma(\nabla u)n \cdot w_\e \; d\sigma dt \to 0$$
as $\e\to0$, for any smooth test function $w$ with $w\cdot n =0$ on $\partial\Omega$. That is, $\e \sigma(\nabla u)n\cdot \tau \to 0$ in the distributional sense, and hence (ii) follows directly from (i).

\subsubsection{Proof of Constantin-Kukavica-Vicol's criterum}
Finally, let us prove (iii). By a view of the proof of (i), it suffices to prove that 
$$\lim_{\e \to 0} \Big[ - \int_0^T\intdO \e \sigma(\nabla u)n \cdot w \; d\sigma  - \frac{\e\theta_0}{2} \int_0^T \intO |\nabla u|^2\Big] \le 0,$$ for all smooth test functions $w\ge 0.$ Let $w_\e$ be the Kato fake layer defined as in \eqref{fake-layer},
in which we can assume further that the cut-off function satisfies $\chi' \le 0$. Then, as in \eqref{weak-3}, we have
\begin{equation}\label{weak-4}\begin{aligned} 
-\int_0^T& \intdO  \e \sigma(\nabla u)n \cdot w \; d\sigma =  -\int_0^T \intdO \e \sigma(\nabla u)n \cdot w_\e \; d\sigma dt
 \\&=  -\int_0^T  \intO \Big[\rho u \cdot \partial_t w_\e + \rho u \otimes u : \nabla w_\e + p(\rho) \Div w_\e  - \e \sigma(\nabla u) : \nabla w_\e\Big] \; dxdt.
\end{aligned}\end{equation}
Clearly, the first and the third integrals converge to zero, thanks to the boundedness of $\rho$ in $\Gamma_\e\times [0,T]$ and \eqref{fake-layer}, whereas the second is treated as
$$ \begin{aligned} \int_0^T \intO  |\rho u \otimes u : \nabla w_\e | \le \frac{C \e}{c_0}  \int_0^T \int_{\Gamma_\e} \frac{|u|^2}{d(x,\partial\Omega)^2} \le \frac{C \e}{c_0}  \int_0^T \int_{\Gamma_\e} |\nabla u|^2,
 \end{aligned}  $$
in which we used the Hardy's inequality and the fact that $\e|\nabla \w_\e | \le C/c_0$ for $c_0$ as in the definition of $\Gamma_\e$. We can take $c_0$ sufficiently small so that $C/c_0 \le \theta_0/2$.  

Finally, we deal with the integral involving the stress tensor. We estimate the integral following \cite{Vicol}. Let us decompose $w_\e = n w^n_\e + \tau w^\tau_\e$, in which $n,\tau$ are local orthogonal basis vectors in the neighborhood $\Gamma_\e$ so that $n$ is in the direction of the distance from $x$ to the boundary $\partial\Omega$. 
By the construction and the fact that $w^n_\e =0$ on the boundary, we have
$$\| \tau \cdot \nabla w_\e\|_\infty +  \| n\cdot \nabla w^n_\e\|_\infty + \| \e n \cdot \nabla w^\tau_\e  \|_\infty \le C.$$
Hence, we have 
\begin{equation}\label{dn-we} \nabla w_\e = \tau^tn (n\cdot \nabla w_\e^\tau) + \cO(1),\end{equation}
in which $\cO(1)$ means uniformly bounded by a constant $C$. Let us now treat the integral 
$$\int_0^T  \intO\e \sigma(\nabla u) : \nabla w_\e \; dxdt = \int_0^T  \intO\e \Big[ D(u): \nabla w_\e + (\eta - \frac23 \mu)(\Div u )(\Div w_\e)\Big]\; dxdt $$
in which $D(u): = \mu  (\nabla u + \nabla u^t)$. The last integral in the above identity is bounded by 
$$  \e  (\eta - \frac23 \mu) \| \nabla u\|_{L^2(\Gamma_\e \times (0,T))} \| \Div w_\e\|_{L^2(\Gamma_\e \times (0,T))} \le C \e^{3/2}  \| \nabla u\|_{L^2(\Gamma_\e \times (0,T))} $$
which converges to zero, since $\sqrt \e  \| \nabla u\|_{L^2(\Gamma_\e \times (0,T))}$ is bounded by the energy inequality \eqref{finiteEE}. To treat the first integral, we use \eqref{dn-we} to get
\begin{equation}\label{est-Du-we}\begin{aligned}
\int_0^T  \intO\e D(u): \nabla w_\e &\le \int_0^T  \intO\e D(u): \tau^t n (n\cdot \nabla w_\e^\tau) + C\e \int_0^T\int_{\Gamma_\e} |\nabla u| 
\\&\le \int_0^T  \intO\e D(u) n \cdot \tau (n\cdot \nabla w_\e^\tau) + C\e^{3/2} \| \nabla u\|_{L^2(\Gamma_\e \times (0,T))}. 
\end{aligned}\end{equation}
By the computation in the proof of Lemma \ref{lem-stress}, we get 
$$ D(u) n \cdot \tau = \mu (\omega \times n)\cdot \tau  + 2\mu \tau \cdot \nabla (u\cdot n) - 2\mu (\tau \cdot \nabla) n \cdot u.$$
We put this expression into \eqref{est-Du-we}. We estimate each of the integral as follows: 
$$\begin{aligned}
\Big| \int_0^T  \intO\e (\tau \cdot \nabla) n \cdot u (n\cdot \nabla w_\e^\tau)\Big| \le C \int_0^T \int_{\Gamma_\e }|u | \le C \e \int_0^T\int_{\Gamma_\e} |\nabla u| \le \cO(\e) \end{aligned}$$ 
and by integration by parts twice, upon noting that $\tau \cdot \nabla (u\cdot n) = 0$ on the boundary, we obtain 
$$\begin{aligned}
 \int_0^T  \intO\e \tau \cdot \nabla (u\cdot n) (n\cdot \nabla w_\e^\tau)  
 &=  \int_0^T  \intO\e n \cdot \nabla (u\cdot n) (\tau\cdot \nabla w_\e^\tau)  + \cO(\e)  \int_0^T\int_{\Gamma_\e} |\nabla u|
\\
&\le C\e  \int_0^T\int_{\Gamma_\e} |\nabla u| \le \cO(\e).
 \end{aligned}$$ 
Finally, we note that by definition \eqref{fake-layer}, the directional derivative 
$$n\cdot \nabla w_\e^\tau  = \frac{1}{c_0 \e} \chi'  (\frac{d(x,\partial\Omega)}{c_0\e}) w + \chi (\frac{d(x,\partial\Omega)}{c_0\e}) n \cdot \nabla w.$$ 
in which $\chi' w \le 0$ on $\Gamma_\e$. Next, by the assumption on the vorticity \eqref{CKV-condition} we get 
$$\begin{aligned}
\int_0^T  \intO \mu \e (\omega \times n)\cdot \tau (n\cdot \nabla w_\e^\tau)  
&\le - \int_0^T \int_{\Gamma_\e} \frac{M_\e(t)}{c_0 \e} \chi'  (\frac{d(x,\partial\Omega)}{c_0\e}) w  + C\e \int_0^T \int_{\Gamma_\e} |\omega|
\\
&\le C\int_0^TM_\e(t)\; dt  + \cO(\e).
 \end{aligned}$$ 

This proves (iii) and hence the main theorem: Theorem \ref{theo-bdry}.

\section{Conclusion and remark on the  Navier-Stokes-Fourier systems}

One of the main concern in this contribution is the existence $/$ non existence of generation of vorticity (equivalent to the anomalous dissipation of energy) in a layer of the order of $\epsilon$ in the zero viscosity limit.

What appears is that this issue remains very similar when  the compressible equations are considered instead of the incompressible and this is in spite of the fact that (in particular  in $2d$) the questions of regularity, stability and uniqueness are much more complicated for the compressible equation than for the incompressible case.  In fact as observed in \cite{BGP} the situation is also similar for the macroscopic limit in the incompressible scaling of solutions of the Boltzmann equation.

Along the same line one may conjecture and we leave it for further works that the situation would be similar for the full Navier-Stokes-Fourier equations. As long as no stringent boundary layer is generated at the level of the temperature equation (this would be achieved by using the Neuman boundary condition for this equation).

More precisely one may  start with  the following compressible model \cite{FN,FN1,N} of Navier-Stokes equations which consist of the three fundamental principles of conservation of mass, momentum, and entropy: 
\begin{equation}\label{NSF2}
\begin{aligned}
 \rho_t + \nabla \cdot (\rho u) & = 0\\
(\rho u)_t + \nabla\cdot (\rho u \otimes u) +  \nabla p(\rho,\theta) &=  \nabla \cdot \epsilon\sigma(\nabla u)
\\
(\rho s(\rho,\theta))_t + \nabla \cdot (\rho s(\rho,\theta) u) -  \nabla \cdot \frac{\kappa(\nabla \theta)}{\theta} & =  r(\nabla u, \nabla \theta)
\end{aligned}\end{equation}
in which the density $\rho \ge 0$, the velocity $u \in \RR^n$, the temperature $\theta > 0$, the pressure $p = p(\rho,\theta)$, and the specific entropy $s = s(\rho,\theta)$, together with the heat flux $-\kappa\nabla \theta$,  the viscous stress tensor $\sigma(\nabla u)$, and the entropy production rate $ r(\nabla u, \nabla \theta)$, respectively defined by $$
\begin{aligned}
\sigma(\nabla u) &= \mu \Big[ (\nabla u + (\nabla u)^t) - \frac 23 (\nabla \cdot u ) I\Big ] + \eta (\nabla \cdot u) I 
\\
r(\nabla u, \nabla \theta)& =  \frac{1}{\theta} \Big(\epsilon \sigma (\nabla u) : \nabla u - \frac{\kappa (\nabla \theta) \cdot \nabla \theta}{\theta} \Big)
\end{aligned}$$ 
with positive constants $\eta, \mu, \kappa$.  In the presence of a boundary, we assume  as above the following generalized Navier boundary conditions for velocity: 
\begin{equation}\label{Navier-BCF}u\cdot n = 0, \qquad  \epsilon \sigma (\nabla u) n \cdot \tau + \lambda_\e (x) u \cdot \tau= 0 \qquad \mbox{on} \quad \partial\Omega\end{equation}
with $\lambda_\e(x)\ge 0$ and $n, \tau$ being the outward normal and tangent vectors at $x$ on $\partial\Omega$  (which also with $\lambda_\e = \infty$  reduces to the classical no-slip boundary condition).


We note that since $u \cdot n =0$ on the boundary, there is no boundary condition needed for the density function $\rho$. Whereas, we assume the zero heat transfer, or Neumann, boundary condition:
\begin{equation}\label{Temp-BC} \nabla \theta \cdot n = 0,  \qquad \mbox{on} \quad \partial\Omega.\end{equation}
This seems to be essential to avoid the generation of boundary  layer that would (at the level of dissipation of energy) interact with the viscous boundary layer for velocity.

Then the formal limit of the above problem  (for  $\epsilon$ and $\kappa$ going to zero ) is   the compressible Euler 
\begin{equation}\label{EulerF2}
\begin{aligned}
 &\bar \rho_t + \nabla \cdot (\bar \rho \bar u) = 0\,,\\
 & (\bar \rho \bar u)_t + \nabla \cdot (\bar \rho \bar u \otimes \bar u) + \nabla p(\bar \rho, \bar \theta)= 0\,,
  \\
  &(\bar \rho s(\bar \rho,\bar \theta))_t + \nabla \cdot (\bar \rho s(\bar \rho,\bar \theta) u)  =0\,.
  \end{aligned}\end{equation}
with the unique  boundary condition
$ \bar u\cdot n =0\,.$

Then one may conjecture that the statement and the proof of the theorem \ref{theo-bdry} can be adapted for the discussion of the convergence of solutions  of (\ref{NSF2} ) to solutions of (\ref{EulerF2}) with no major modification.


\end{document}